\documentclass[oneside,english]{amsart}
\usepackage[T1]{fontenc}
\usepackage[latin9]{inputenc}
\usepackage{amsthm}
\usepackage{amstext}
\usepackage{amssymb}
\usepackage{esint}

\makeatletter
\numberwithin{equation}{section}
\numberwithin{figure}{section}
\theoremstyle{plain}
\newtheorem{thm}{\protect\theoremname}[section]
  
  \theoremstyle{plain}
  \theoremstyle{definition}
  \newtheorem{defn}[thm]{\protect\definitionname}
  \theoremstyle{plain}
  \newtheorem{lem}[thm]{\protect\lemmaname}
  
  \theoremstyle{plain}
	\newtheorem{rem}[thm]{\protect\remarkname}
  \theoremstyle{plain}
	\theoremstyle{plain}
	\newtheorem{exa}[thm]{\protect\examplename}
\makeatother

\usepackage{babel}
  \providecommand{\definitionname}{Definition}
  \providecommand{\lemmaname}{Lemma}
  \providecommand{\theoremname}{Theorem}
  \providecommand{\corollaryname}{Corollary}
  \providecommand{\remarkname}{Remark}
  \providecommand{\propositionname}{Proposition}
  \providecommand{\examplename}{Example}
	
\begin{document}

\title[Spectral estimates of the $p$-Laplace Neumann operator]{Spectral estimates of the $p$-Laplace Neumann operator in conformal regular domains}

\author{V.~Gol'dshtein and A.~Ukhlov}
\begin{abstract}
In this paper we study spectral estimates of the $p$-Laplace Neumann operator in conformal regular domains $\Omega\subset\mathbb R^2$. 
This study is based on (weighted) Poincar\'e-Sobolev inequalities. The main technical tool is the composition operators theory in relation with the Brennan's conjecture. We prove that if the Brennan's conjecture holds then for any $p\in (4/3,2)$ and $r\in (1,p/(2-p))$ the weighted $(r,p)$-Poincare-Sobolev inequality holds with the constant depending on the conformal geometry of $\Omega$. As a consequence we obtain classical Poincare-Sobolev inequalities and spectral estimates for the first nontrivial eigenvalue of the $p$-Laplace Neumann operator for conformal regular domains.

\end{abstract}
\maketitle
\footnotetext{\textbf{Key words and phrases:} conformal mappings,
Sobolev spaces, elliptic equations.} \footnotetext{\textbf{2010
Mathematics Subject Classification:} 35P15, 46E35, 30C65.}

\section{Introduction and methodology}

Let  $\Omega\subset\mathbb R^2$ be a simply connected planar domain with a smooth boundary $\partial \Omega$.  We consider the Neumann eigenvalue problem
for the $p$-Laplace operator ($1<p<2$):
\begin{equation}
\label{pLaplace}
\begin{cases}
-\operatorname{div}\left(|\nabla u|^{p-2}\nabla u\right)=\mu_p|u|^{p-2}u& \text{in}\,\,\,\,\,\Omega\\
\frac{\partial u}{\partial n}=0& \text{on}\,\,\,\partial\Omega.
\end{cases}
\end{equation}

The weak statement of this spectral problem is as follows: a function
$u$ solves the previous problem iff $u\in W^{1,p}(\Omega)$ and 
\[
\iint\limits _{\Omega} \left(|\nabla u(x,y)|^{p-2}\nabla u(x,y)\right)\cdot\nabla v(x,y)~dxdy=\mu_p \iint\limits _{\Omega}|u|^{p-2}u(x,y)v(x,y)~dxdy
\]
for all $v\in W^{1,p}(\Omega)$. 

The first nontrivial Neumann eigenvalue $\mu_p$ can be characterized as
$$
\mu_p(\Omega)=\min\left\{\frac{\iint\limits_{\Omega}|\nabla u(x,y)|^p~dxdy}{\iint\limits_{\Omega}|u(x,y)|^p~dxdy}: u\in W^{1,p}(\Omega)\setminus\{0\},\,
\iint\limits_{\Omega}|u|^{p-2}u~dxdy=0\right\}.
$$

Moreover, $\mu_p(\Omega)^{-\frac{1}{p}}$ is the best constant $B_{p,p}(\Omega)$ (see, for example, \cite{BCT15}) in the following Poincar\'e-Sobolev inequality
$$
\inf\limits_{c\in\mathbb R}\|f-c \mid L^p(\Omega)\|\leq B_{p,p}(\Omega)\|\nabla f \mid L^p(\Omega)\|,\,\,\, f\in W^{1,p}(\Omega).
$$

We prove, that $\mu_p(\Omega)$ depends on the conformal geometry of $\Omega$ and can be estimated in terms of Sobolev norms of a conformal mapping of the unit disc $\mathbb D$ onto $\Omega$  (Theorem A).

The main technical tool is existence of universal weighted Poincar\'e-Sobolev inequalities 
\begin{multline}
\inf_{c\in\mathbb R}\biggl(\iint\limits _{\Omega}|f(x,y)-c|^{r}h(x,y)\, dxdy \biggr)^{\frac{1}{r}}\\
\leq B_{r,p}(\Omega,h)\biggl(\iint\limits _{\Omega}
|\nabla f(x,y)|^{p}~dxdy\biggr)^{\frac{1}{p}},\,\,f\in W^{1,p}(\Omega),
\label{eq:WPI}
\end{multline} 
in any simply connected domain $\Omega\ne\mathbb R^2$ for conformal weights $h(x,y):=J_{\varphi}(x,y)=|\varphi^{\prime} (x,y)|^2$ induced by conformal homeomorphisms $\varphi:\Omega\to\mathbb D$. 

\vspace{0.5cm}

{\sc Main results of this article can be divided onto two parts. The first part is the technical one and concerns to weighted Poincar\'e-Sobolev inequalities in arbitrary simply connected planar domains with non\-emp\-ty boundaries (Theorem C and its consequences). Results of the first part will be used for (non weighted) Poincar\'e-Sobolev inequalities in so-called conformal regular domains (Theorem B) that leads to lower estimates for the first nontrivial eigenvalue $\mu_p$ (Theorem A). To the best of our knowledge lower estimates were known before for convex domains only. The class of conformal regular domains is much larger. It includes, for example, bounded domains with Lipschitz boundaries and quasidiscs, i.e images of discs under quasiconformal homeomorphisms of whole plane.}

\vspace{0.5cm}

{\bf Brennan's conjecture} \cite{Br} is that for a conformal mapping $\varphi : \Omega\to\mathbb D$
\begin{equation}
\int\limits _{\Omega}|\varphi'(x,y)|^{\beta}~dxdy<+\infty,\quad\text{for all}\quad\frac{4}{3}<\beta<4.\label{eq:BR1}
\end{equation}
For the inverse conformal mapping  $\psi=\varphi^{-1} : \mathbb D\to \Omega$ Brennan's conjecture \cite{Br} states 
\begin{equation}
\iint\limits _{\mathbb D}|\psi'(u,v)|^{\alpha}~dudv <+\infty,\quad\text{for all}\quad -2<\alpha<\frac{2}{3}.\label{eq:BR}
\end{equation}

\vspace{0.5cm}

A connection between Brennan's Conjecture and composition operators on Sobolev spaces was established in \cite{GU3}:

\vskip 0.5cm
{\bf Equivalence Theorem.} 
{\it Brennan's Conjecture (\ref{eq:BR1}) holds for a number $s\in ({4}/{3};4)$ if and only if a conformal 
mapping $\varphi : \Omega\to\mathbb D$ induces a bounded composition operator
$$
\varphi^{\ast}: L^{1,p}(\mathbb D)\to L^{1,q(p,\beta)}(\Omega)
$$
for any $p\in (2;+\infty)$ and $q(p,\beta)=p\beta/(p+\beta-2)$.}

\vspace{0.5cm}

The inverse Brennan's Conjecture states that for any conformal mapping $\psi:\mathbb D\to\Omega$, the derivative $\psi'$ belongs to the Lebesgue space $L^{\alpha}(\mathbb D)$, for $-2<\alpha<2/3$.  The integrability of the derivative in the power greater then $2/3$ requires some restrictions on the geometry of $\Omega$. If $\Omega\subset\mathbb R^2$ is a simply connected planar domain of finite area, then
$$
\iint\limits_{\mathbb D} |\psi'(u,v)|^2~dudv= \iint\limits_{\mathbb D} J_{\psi}(u,v)~dudv=|\Omega|<\infty.
$$
Integrability of the derivative in the power $\alpha>2$ is impossible without additional assumptions on the geometry of $\Omega$. For example, for any  $\alpha>2$ the domain $\Omega$  necessarily has a finite geodesic diameter \cite{GU4}. 

\vspace{0.5cm}

{\it Let  $\Omega\subset\mathbb{R}^2$ be a simply connected planar domain. Then 
$\Omega$ is called a conformal $\alpha$-regular domain if there exists a conformal mapping $\varphi:\Omega\to\mathbb{D}$ such that
\begin{equation}
\label{alphareg}
\iint\limits_{\mathbb{D}}\left|(\varphi^{-1})'(u,v)\right|^{\alpha}~dudv<\infty\,\,\,\text{for some}\,\,\, \alpha>2.
\end{equation}
If $\Omega$ is a conformal $\alpha$-regular domain for some $\alpha>2$  we call $\Omega$ a conformal regular domain.}

The property of $\alpha$-regularity does not depends on choice of a conformal mapping $\varphi$ and depends on the hyperbolic geometry of $\Omega$ only. For connection between conformal mapping and hyperbolic geometry see, for example, \cite{BM}.

Note that a boundary $\partial \Omega$ of a conformal regular domain can have any Hausdorff dimension between one and two, but can not be equal two \cite{HKN}.

The next theorem gives lower estimates of the first nontrivial $p$-Laplace Neumann eigenvalue: 

\vspace{0.5cm}

{\bf Theorem A.} {\it Let $\varphi:\Omega\to\mathbb{D}$ be a conformal homeomorphism from a conformal $\alpha$-regular domain
 $\Omega$ to the unit disc $\mathbb{D}$ and  Brennan's Conjecture holds. Then for every 
$p \in \left( \max\{4/3, (\alpha+2)/\alpha\},2 \right)$  the following inequality is correct
\begin{multline}
\frac{1}{\mu_p(\Omega)}\leq\\
 \inf\limits_{q\in[1,2p/(4-p))} \left\{\|{(\varphi^{-1})^{\prime}}| L^{\alpha}(\mathbb D)\|^{2}\left(\iint\limits_{\mathbb D}|\left(\varphi^{-1}\right)^{\prime}|^{\frac{(p-2)q}{p-q}}~dudv\right)^{\frac{p-q}{q}} \cdot B^{p}_{\frac{\alpha p}{\alpha-2},q}(\mathbb D)\right\}.
\nonumber
\end{multline}
Here $B_{r,q}(\mathbb D)$ is the exact constant in the corresponding $(r,q)$-Poincare-Sobolev inequality in the unit disc $\mathbb D$ for $r=\alpha p/(\alpha-2)$.
}  

In the limit case $\alpha=\infty$ and $p=q$ we have

\vspace{0.5cm}

{\bf Corollary A.} {\it Suppose that $\Omega$ is smoothly bounded Jordan domain with a boundary $\partial\Omega$ of a class $C^1$ with a Dini continuous normal. Let $\varphi:\Omega\to\mathbb{D}$ be a conformal homeomorphism from $\Omega$ onto the unit disc $\mathbb{D}$. Then for every 
$p \in \left( 1,2 \right)$  the following inequality is correct
$$
\frac{1}{\mu_p(\Omega)}\leq
\|{(\varphi^{-1})^{\prime}}| L^{\infty}(\mathbb D)\|^{p}\frac{1}{\mu_p(\mathbb D)}.
$$
}  

\begin{rem}
The constant $B_{r,q}(\mathbb D)$ satisfies \cite{GT, GU5}:
$$
B_{r,q}(\mathbb D)\leq \frac{2}{\pi^{\delta}}\left(\frac{1-\delta}{{1}/{2}-\delta}\right)^{1-\delta},\,\,\delta={1}/{q}-{1}/{r}.
$$
\end{rem}

\begin{rem}
The Brennan's conjecture was proved for $\alpha\in [\alpha_0,2/3)$ when $\alpha_0=-1.752$ \cite{Shim}. 
\end{rem}

\begin{rem}
The numeric estimates for the $\mu_p(\Omega)$ were known before only for convex domains. For example, in \cite{ENT} was proved that
$$
\mu_p(\Omega)\geq \left(\frac{\pi_p}{d(\Omega)}\right)^p
$$
where
$$
\pi_p=2\int\limits_0^{(p-1)^{\frac{1}{p}}}\frac{dt}{(1-t^p/(p-1))^{\frac{1}{p}}}
=2\pi\frac{(p-1)^{\frac{1}{p}}}{p(\sin(\pi/p))}.
$$
\end{rem}

\begin{rem}
Theorem A has a direct connection with the spectral stability problem for the $p$-Laplace operator. 
See, the recent papers, \cite{BL, BLC, L}, where one can found the history of the problem, main results in this area and appropriate references.
\end{rem}

Theorem A is a corollary  (after simple calculations)  of the following version of the 
Poincar\'e-Sobolev inequality

\vspace{0.5cm}

{\bf Theorem B.}
{\it Suppose that $\Omega\subset\mathbb{R}^2$ be a conformal $\alpha$-regular domain and Brennan's Conjecture holds.
Then for every $p \in \left( \max\{4/3, \alpha/(\alpha-1)\},2 \right)$, every $s \in (1,\frac{\alpha-2}{\alpha}\frac{p}{2-p})$ and every function 
$f\in W^{1,p}(\Omega)$, the inequality 
\begin{equation}
\inf_{c\in\mathbb R}\biggl(\iint\limits _{\Omega}|f(x,y)-c|^{s}\, dxdy \biggr)^{\frac{1}{s}}\leq B_{s,p}(\Omega)\biggl(\iint\limits _{\Omega}
|\nabla f(x,y)|^{p}~dxdy\biggr)^{\frac{1}{p}}\label{eq:NonWPI}
\end{equation}
holds with the constant 
\begin{multline}
B_{s,p}(\Omega)\leq \|{(\varphi^{-1})^{\prime}}| L^{\alpha}(\mathbb D)\|^{\frac{2}{s}}B_{r,p}(\Omega,h)\\
 \leq \inf\limits_{q\in[1,2p/(4-p))}\left\{B_{\frac{\alpha s}{\alpha-2},q}(\mathbb D)\cdot\|{(\varphi^{-1})^{\prime}}| L^{\alpha}(\mathbb D)\|^{\frac{2}{s}}K_{p,q}(\mathbb D) \right\}.
\nonumber
\end{multline}
}

Here $B_{r,p}(\Omega,h)$, $r=\alpha s/(\alpha-2)$,  is the best constant of the following weighted  Poincar\'e-Sobolev inequality:

\vspace{0.5cm}

{\bf Theorem C.}
{\it Suppose  $\Omega\subset\mathbb{R}^2$ is a simply connected domain with non empty boundary, Brennan's Conjecture holds and $h(z)=J(z,\varphi)$ is the conformal weight defined by a conformal homeomorphism $\varphi : \Omega\to\mathbb D$. Then for every $p \in \left( 4/3,2 \right)$ and every function 
$f\in W^{1,p}(\Omega)$, the inequality 
\begin{equation}
\inf_{c\in\mathbb R}\biggl(\iint\limits _{\Omega}|f(x,y)-c|^{r}h(x,y)\, dxdy \biggr)^{\frac{1}{r}}\leq B_{r,p}(\Omega,h)\biggl(\iint\limits _{\Omega}
|\nabla f(x,y)|^{p}~dxdy\biggr)^{\frac{1}{p}}
\label{eq:WPI1}
\end{equation} 
holds for any $r \in [1,p/(2-p))$ with the constant 
$$
B_{r,p}(\Omega,h)\leq \inf\limits_{q\in[1,2p/(4-p))}\left\{K_{p,q}(\mathbb D)\cdot B_{r,q}(\mathbb D)\right\}
$$.}

Here $B_{r,q}(\mathbb D)$ is the best constant in the (non-weighted) $(r.q)$-Poincar\'e-Sobolev inequality in the unit disc 
$\mathbb D\subset\mathbb R^2$ and $K_{p,q}(\Omega)$ is the norm of composition operator 
$$
\left(\varphi^{-1}\right)^{\ast}: L^{1,p}(\Omega)\to L^{1,q}(\mathbb D)
$$
generated by the inverse conformal mapping $\varphi^{-1}: \mathbb D\to\Omega$:
$$
K_{p,q}(\Omega)\leq \left(\iint\limits_{\mathbb D}|\left(\varphi^{-1}\right)'|^{\frac{(p-2)q}{p-q}}~dudv\right)^{\frac{p-q}{pq}}.
$$

\begin{rem}
 Theorem C is correct (without referring of Brennan's conjecture) for 
\begin{equation}
1\leq r\leq \frac{2p}{2-p}\cdot\frac{\left|\alpha_{0}\right|}{2+\left|\alpha_{0}\right|}<\frac{p}{2-p}\label{eq:PQR}
\end{equation}
and $p\in ((|\alpha_0|+2)/(|\alpha_0|+1),2)$, where $\alpha_0=-1.752$ represents the best result for which Brennan's conjecture was proved.
\end{rem}

\begin{rem} Let $\Omega\subset\mathbb R^2$ be a simply connected smooth domain. Then $\varphi^{-1}\in L^{\alpha}(\mathbb D)$ for all $\alpha\in\mathbb R$ and we have the weighted Poincar\'e-Sobolev inequality (\ref{eq:WPI}) for all $p\in[1,2)$ and all $r\in [1,2p/(2-p)]$.
\end{rem}

In the case, when we have embedding of weighted Lebesgue spaces in non-weighted one, the weighted Poincar\'e-Sobolev inequality 
(\ref{eq:WPI1}) implies the standard Poin\-ca\-r\'e-Sobolev inequality (\ref{eq:NonWPI}). 

Let us give some historical remarks about the notion of conformal regular domains.
This notion was introduced in \cite{BGU1} and was applied to the stability problem for eigenvalues of the Dirichlet-Laplace operator. 
In \cite{GU5}  the lower estimates for the first non-trivial eigenvalues of the the Neumann-Laplace operator in conformal regular domains  were obtained.
In \cite{GU4} we proved but did not formulated the following important fact about conformal regular domains and the Poinca\-r\'e-Sobolev inequality:

\begin{thm}
Let a simply connected domain $\Omega\subset\mathbb R^2$ of finite area does not support the (s,2)-Poincar\'e-Sobolev inequality
$$
\inf\limits_{c\in\mathbb R}\biggl(\iint\limits_{\Omega}|f(x,y)-c|^{s}~dxdy\biggr)^{\frac{1}{s}}\leq B_{s,2}(\Omega)\biggl(\iint\limits_{\Omega}|\nabla f(x,y)|^2~dxdy\biggr)^{\frac{1}{2}}
$$
for some $s\geq 2$. Then $\Omega$ is not a conformal regular domain.
\end{thm}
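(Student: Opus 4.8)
The plan is to prove the contrapositive: if $\Omega$ is conformal $\alpha$-regular for some $\alpha>2$, then for \emph{every} finite $s\geq1$ (in particular every $s\geq2$) the $(s,2)$-Poincar\'e--Sobolev inequality holds on $\Omega$, so the hypothesis of the theorem cannot be met. Fix a conformal homeomorphism $\varphi:\Omega\to\mathbb D$ and set $\psi:=\varphi^{-1}:\mathbb D\to\Omega$. First I would record the $p=2$ case of the weighted inequality \eqref{eq:WPI1}, which needs neither Brennan's conjecture nor any regularity of $\Omega$: for $f\in W^{1,2}(\Omega)$ put $g:=f\circ\psi$; by the conformal invariance of the planar Dirichlet integral one has $\iint\limits_{\mathbb D}|\nabla g|^{2}\,dudv=\iint\limits_{\Omega}|\nabla f|^{2}\,dxdy$, and (using Poincar\'e's inequality on the disc to control $g$ itself) $g\in W^{1,2}(\mathbb D)$. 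Applying the classical $(r,2)$-Poincar\'e--Sobolev inequality on $\mathbb D$ — with finite constant $B_{r,2}(\mathbb D)$ for every $r<\infty$, by the two-dimensional embedding $W^{1,2}(\mathbb D)\hookrightarrow L^{r}(\mathbb D)$ — and changing variables back via $w=\varphi(z)$, $dudv=|\varphi'(z)|^{2}\,dxdy=h(z)\,dxdy$, gives
\[
\inf_{c\in\mathbb R}\biggl(\iint\limits_{\Omega}|f-c|^{r}h\,dxdy\biggr)^{1/r}\leq B_{r,2}(\mathbb D)\biggl(\iint\limits_{\Omega}|\nabla f|^{2}\,dxdy\biggr)^{1/2},\qquad r\in[1,\infty).
\]
In the language of the paper this is just the statement that $\psi^{\ast}:L^{1,2}(\Omega)\to L^{1,2}(\mathbb D)$ is an isometry — the degenerate case $p=q=2$, $K_{2,2}=1$.

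Next I would remove the weight by H\"older's inequality. For $1\leq s<r$ and $c\in\mathbb R$,
\[
\iint\limits_{\Omega}|f-c|^{s}\,dxdy\leq\biggl(\iint\limits_{\Omega}|f-c|^{r}h\,dxdy\biggr)^{s/r}\biggl(\iint\limits_{\Omega}h^{-\frac{s}{r-s}}\,dxdy\biggr)^{\frac{r-s}{r}},
\]
and, since $|\varphi'|^{-1}=|\psi'\circ\varphi|$, the change of variables $w=\varphi(z)$ turns the last factor into $\iint\limits_{\mathbb D}|\psi'|^{\frac{2s}{r-s}+2}\,dudv$. The decisive choice is $r=\tfrac{\alpha s}{\alpha-2}$: it is finite and satisfies $r>s$ because $\alpha>2$, and it makes $\tfrac{2s}{r-s}+2=\alpha$, so that this integral equals $\iint\limits_{\mathbb D}|\psi'|^{\alpha}\,dudv<\infty$ by conformal $\alpha$-regularity. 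Combining the two displays — picking a near-optimal $c$ in the weighted inequality before passing to the infimum over $c$ on the left — yields
\[
\inf_{c\in\mathbb R}\biggl(\iint\limits_{\Omega}|f-c|^{s}\,dxdy\biggr)^{1/s}\leq\|{\psi'}| L^{\alpha}(\mathbb D)\|^{2/s}\,B_{\frac{\alpha s}{\alpha-2},2}(\mathbb D)\biggl(\iint\limits_{\Omega}|\nabla f|^{2}\,dxdy\biggr)^{1/2},
\]
so $B_{s,2}(\Omega)<\infty$ for every $s\in[1,\infty)$, contradicting the assumed failure for some $s\geq2$. This is precisely the $p=2$ endpoint of Theorem B, obtained here directly.

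The argument is short because its two nontrivial inputs are classical: the conformal invariance of the planar Dirichlet energy (so that composition with $\psi$ is a $W^{1,2}$-isometry, with no hypothesis on $\partial\Omega$ — hence no Brennan's conjecture — for $p=2$), and the finiteness of $B_{r,2}(\mathbb D)$ for all $r<\infty$ (with the explicit bound recorded in the Remarks above). The point requiring the most care is the validity of the change-of-variables formulae for the conformal homeomorphisms $\varphi,\psi$ — Luzin's $N$-property and absolute continuity on lines — which holds for conformal (indeed quasiconformal) maps; to be safe one first proves the inequalities for Lipschitz $f$ and then extends to all $f\in W^{1,2}(\Omega)$ by density. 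Beyond that, the only genuinely substantive step is the exponent bookkeeping of the second paragraph: the single choice $r=\alpha s/(\alpha-2)$ must simultaneously keep $B_{r,2}(\mathbb D)$ finite, keep $r>s$, and match the de-weighting integral to the $\alpha$-regularity integral — which is exactly what forces $\alpha>2$ (finite area, i.e.\ $\alpha=2$, is not enough) and explains why $s=2$, indeed every finite $s$, is admissible.
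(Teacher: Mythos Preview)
Your proof is correct and is precisely the $p=2$ specialization of the paper's own machinery: the weighted Poincar\'e--Sobolev inequality (Theorem~C) combined with the de-weighting step (Lemma~3.3), which together give Theorem~B. The paper itself does not reproduce a proof of this statement here (it is attributed to \cite{GU4}), but your argument is exactly what one obtains by running Theorem~C and Lemma~3.3 at $p=2$, where --- as you correctly emphasize --- the conformal invariance of the planar Dirichlet integral makes the composition operator $(\varphi^{-1})^{\ast}:L^{1,2}(\Omega)\to L^{1,2}(\mathbb D)$ an isometry, so that no appeal to Brennan's conjecture is needed and every $r<\infty$ (hence every $s<\infty$) is admissible.
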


In the present work we suggest for the conformal regular domains a new method based on the composition operators theory. The suggested method of investigation is based on the composition
operators theory \cite{U1,VU1} and its applications to the Sobolev
type embedding theorems \cite{GGu,GU}. 

The following diagram illustrate this idea:

\[\begin{array}{rcl}
W^{1,p}(\Omega) & \stackrel{\varphi^*}{\longrightarrow} & W^{1,q}(\mathbb{D}) \\[2mm]
\multicolumn{1}{c}{\downarrow} & & \multicolumn{1}{c}{\downarrow} \\[1mm]
L^s(\Omega) & \stackrel{(\varphi^{-1})^*}{\longleftarrow} & L^r(\mathbb{D})
\end{array}\]

Here the operator $\varphi^{\ast}$ defined by the composition rule $\varphi^{\ast}(f)=f\circ\varphi$ is a bounded composition operator on Sobolev spaces induced by a homeomorphism $\varphi$ of $\Omega$ and $\mathbb D$ and the operator $(\varphi^{-1})^{\ast}$ defined by the composition rule $(\varphi^{-1})^{\ast}(f)=f\circ\varphi^{-1}$ is a bounded composition operator on Lebesgue spaces.
This method allows to transfer Poinca\-r\'e-Sobolev inequalities from regular domains (for example, from the unit disc $\mathbb D$) to $\Omega$.

In the recent works we studied
composition operators on Sobolev spaces 
in connection with the conformal mappings theory \cite{GU1}. This
connection leads to weighted Sobolev embeddings \cite{GU2,GU3} with
the universal conformal weights. Another application of conformal
composition operators was given in \cite{BGU1} where the spectral
stability problem for conformal regular domains was considered.

\section{Composition Operators}

Because all composition operators that will be used in this paper are induced by conformal homeomorphisms we formulate results about composition operators for diffeomorphisms only. 

\subsection{Composition Operators on Lebesgue Spaces}

For any domain $\Omega \subset \mathbb{R}^{2}$ and any $1\leq p<\infty$
we consider the Lebesgue space 
\[
L^{p}(\Omega):=\left\{ f:\Omega \to R:\|f\mid{L^{p}(\Omega)}\|:=\left(\iint\limits _{\Omega}|f(x,y)|^{p}~dxdy\right)^{1/p}<\infty\right\} .
\]
 
The following theorem  about composition operators on Lebesgue spaces is well known (see, for example \cite{VU1}): 

\begin{thm}
\label{thm:LpLq} Let $\varphi:\Omega\to\ \Omega'$  be a diffeomorphism between two planar domains $\Omega$ and $\Omega'$. Then the
composition operator 
\[
\varphi^{\ast}:L^{r}(\Omega')\to L^{s}(\Omega),\,\,\,1\leq s\leq r<\infty,
\]
 is bounded, if and only if 
and 
\begin{gather}
\biggl(\iint\limits _{\Omega'}\left(J_{\varphi^{-1}}(u,v)\right)^{\frac{r}{r-s}}~dudv\biggl)^{\frac{r-s}{rs}}=K<\infty, \,\,\,1\leq s<r<\infty,\nonumber\\
\sup\limits_{(u,v)\in\Omega'}\left(J_{\varphi^{-1}}(u,v)\right)^{\frac{1}{s}}=K<\infty, \,\,\,1\leq s=r<\infty.
\nonumber
\end{gather}
 The norm of the composition operator $\|\varphi^{\ast}\|=K$.
\end{thm}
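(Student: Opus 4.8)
\textbf{Proof strategy for Theorem~\ref{thm:LpLq}.}
The plan is to reduce the boundedness of the composition operator to a classical duality/change-of-variables argument. Fix $f\in L^{r}(\Omega')$ and set $g=\varphi^{\ast}f=f\circ\varphi\in L^{s}(\Omega)$. Since $\varphi$ is a diffeomorphism, the change of variables $u=\varphi(x,y)$ is licit and gives
\[
\iint\limits_{\Omega}|g(x,y)|^{s}~dxdy=\iint\limits_{\Omega}|f(\varphi(x,y))|^{s}~dxdy=\iint\limits_{\Omega'}|f(u,v)|^{s}\,J_{\varphi^{-1}}(u,v)~dudv.
\]
Thus boundedness of $\varphi^{\ast}:L^{r}(\Omega')\to L^{s}(\Omega)$ is exactly the statement that the measure $d\nu=J_{\varphi^{-1}}\,dudv$ satisfies $\|f\mid L^{s}(\Omega',\nu)\|\leq K\|f\mid L^{r}(\Omega')\|$ for all $f$, i.e.\ a continuous embedding $L^{r}(\Omega')\hookrightarrow L^{s}(\Omega',\nu)$ of a Lebesgue space into a weighted one with the \emph{smaller} exponent on the left.

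The first main step is the case $1\le s<r<\infty$. Here I would apply H\"older's inequality with exponents $r/s$ and $r/(r-s)$ to the integral $\iint_{\Omega'}|f|^{s}J_{\varphi^{-1}}~dudv$, splitting the integrand as $|f|^{s}\cdot J_{\varphi^{-1}}$; this yields
\[
\iint\limits_{\Omega'}|f|^{s}J_{\varphi^{-1}}~dudv\le\left(\iint\limits_{\Omega'}|f|^{r}~dudv\right)^{\frac{s}{r}}\left(\iint\limits_{\Omega'}\left(J_{\varphi^{-1}}\right)^{\frac{r}{r-s}}~dudv\right)^{\frac{r-s}{r}},
\]
which is precisely $\|\varphi^{\ast}f\mid L^{s}(\Omega)\|\le K\|f\mid L^{r}(\Omega')\|$ with the stated $K$. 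For the converse and the sharpness of the constant, I would test the inequality on functions approximating $f=\left(J_{\varphi^{-1}}\right)^{\frac{1}{r-s}}$ (suitably truncated to stay in $L^{r}$ if $\Omega'$ is unbounded or the weight is unbounded); feeding this choice back into both sides shows that the H\"older step is saturated, forcing the finiteness of $K$ and identifying $\|\varphi^{\ast}\|=K$. The case $s=r$ is simpler: $\iint_{\Omega'}|f|^{r}J_{\varphi^{-1}}~dudv\le\bigl(\operatorname{ess\,sup}J_{\varphi^{-1}}\bigr)\iint_{\Omega'}|f|^{r}~dudv$, with sharpness obtained by concentrating $f$ near a point where $J_{\varphi^{-1}}$ is close to its supremum.

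The step I expect to be the main obstacle is the sharpness/necessity direction, specifically making the extremal test function rigorous. The candidate extremizer $\left(J_{\varphi^{-1}}\right)^{\frac{1}{r-s}}$ need not itself lie in $L^{r}(\Omega')$, so one has to work with an increasing sequence of truncations $\min\{J_{\varphi^{-1}},N\}^{\frac{1}{r-s}}\chi_{\Omega'_N}$ on exhausting subdomains $\Omega'_N$, verify that the ratio of the two sides converges to $K$ by monotone convergence, and simultaneously conclude that if $K=\infty$ the operator is genuinely unbounded. Everything else is a routine application of H\"older's inequality and the change-of-variables formula for diffeomorphisms, the latter being unproblematic since $\varphi$ is smooth with smooth inverse and $J_{\varphi^{-1}}>0$ everywhere.
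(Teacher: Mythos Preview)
Your argument is correct and is precisely the standard proof of this classical fact: change of variables reduces the question to the embedding $L^{r}(\Omega',dudv)\hookrightarrow L^{s}(\Omega',J_{\varphi^{-1}}\,dudv)$, H\"older gives the sufficiency with the stated constant, and truncations of the H\"older extremizer $(J_{\varphi^{-1}})^{1/(r-s)}$ give the necessity and the sharp norm. Note, however, that the paper does not prove this theorem at all; it is quoted as well known with a reference to~\cite{VU1}, so there is no ``paper's own proof'' to compare against beyond the fact that the referenced source uses essentially the same change-of-variables/H\"older argument you outline.
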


\subsection{Composition Operators on Sobolev Spaces}

We define the Sobolev space $W^{1,p}(\Omega)$, $1\leq p<\infty$
as a Banach space of locally integrable weakly differentiable functions
$f:\Omega\to\mathbb{R}$ equipped with the following norm: 
\[
\|f\mid W^{1,p}(\Omega)\|=\biggr(\iint\limits _{\Omega}|f(x,y)|^{p}\, dxdy\biggr)^{\frac{1}{p}}+\biggr(\iint\limits _{\Omega}|\nabla f(x,y)|^{p}\, dxdy\biggr)^{\frac{1}{p}}.
\]

We define also the homogeneous seminormed Sobolev space $L^{1,p}(\Omega)$
of locally integrable weakly differentiable functions $f:\Omega\to\mathbb{R}$ equipped
with the following seminorm: 
\[
\|f\mid L^{1,p}(\Omega)\|=\biggr(\iint\limits _{\Omega}|\nabla f(x,y)|^{p}\, dxdy\biggr)^{\frac{1}{p}}.
\]

Recall that the embedding operator $i:L^{1,p}(\Omega)\to L_{\operatorname{loc}}^{1}(\Omega)$
is bounded.

Let $\Omega$ and $\Omega'$ be domains in $\mathbb{R}^2$. We say that
a diffeomorphism $\varphi:\Omega\to\Omega'$ induces a bounded composition
operator 
\[
\varphi^{\ast}:L^{1,p}(\Omega')\to L^{1,q}(\Omega),\,\,\,1\leq q\leq p\leq\infty,
\]
by the composition rule $\varphi^{\ast}(f)=f\circ\varphi$ if $\varphi^{\ast}(f)\in L^{1,q}(\Omega)$ and there exists a constant $K<\infty$ such that 
\[
\|\varphi^{\ast}(f)\mid L^{1,q}(\Omega)\|\leq K\|f\mid L^{1,p}(\Omega')\|.
\]

The main result of \cite{U1} gives the analytic description of composition
operators on Sobolev spaces (see, also \cite{VU1}) and asserts (in the case of diffeomorphisms) that

\begin{thm}
\label{CompTh} \cite{U1} A diffeomorphism $\varphi:\Omega\to\Omega'$
between two domains $\Omega$ and $\Omega'$ induces a bounded composition
operator 
\[
\varphi^{\ast}:L^{1,p}(\Omega')\to L^{1,q}(\Omega),\,\,\,1\leq q<p<\infty,
\]
 if and only if 
\[
K_{p,q}(\Omega)=\biggl(\iint\limits _{\Omega}\biggl(\frac{|\varphi'(x,y)|^{p}}{|J_{\varphi}(x,y)|}\biggr)^{\frac{q}{p-q}}~dxdy\biggr)^{\frac{p-q}{pq}}<\infty.
\]
\end{thm}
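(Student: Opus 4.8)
The plan is to establish the two inequalities $\|\varphi^{\ast}\|\le K_{p,q}(\Omega)$ (sufficiency) and $K_{p,q}(\Omega)\le\|\varphi^{\ast}\|$ (necessity) separately; together they give both the equivalence and the identification of the operator norm with $K_{p,q}(\Omega)$. The starting point for both directions is the chain rule: since $\varphi$ is a diffeomorphism and $f\in L^{1,p}(\Omega')$ is weakly differentiable, the composition $f\circ\varphi$ is weakly differentiable with $\nabla(f\circ\varphi)(x)=(D\varphi(x))^{T}(\nabla f)(\varphi(x))$ for almost every $x$, where $D\varphi$ is the Jacobi matrix. This yields the pointwise bound $|\nabla(f\circ\varphi)(x)|\le|\varphi'(x)|\,|(\nabla f)(\varphi(x))|$, with $|\varphi'(x)|$ the operator norm of $D\varphi(x)$.

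For sufficiency I would insert the factor $J_{\varphi}(x)^{q/p}J_{\varphi}(x)^{-q/p}$ into the integral and apply H\"older's inequality with exponents $p/q$ and $p/(p-q)$ (this is where $q<p$ is used):
\[
\iint\limits_{\Omega}|\nabla(f\circ\varphi)|^{q}\,dx\le\left(\iint\limits_{\Omega}|(\nabla f)(\varphi(x))|^{p}J_{\varphi}(x)\,dx\right)^{\frac{q}{p}}\left(\iint\limits_{\Omega}\Bigl(\frac{|\varphi'|^{p}}{J_{\varphi}}\Bigr)^{\frac{q}{p-q}}dx\right)^{\frac{p-q}{p}}.
\]
The change of variables $y=\varphi(x)$, $dy=J_{\varphi}(x)\,dx$, turns the first factor into $\|f\mid L^{1,p}(\Omega')\|^{q}$, while the second factor is exactly $K_{p,q}(\Omega)^{q}$; this proves sufficiency with $\|\varphi^{\ast}\|\le K_{p,q}(\Omega)$.

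For necessity, assume the operator is bounded with norm $K=\|\varphi^{\ast}\|$. Transporting the defining inequality to the image domain by $x=\varphi^{-1}(y)$ gives a weighted gradient inequality on $\Omega'$ with matrix weight $A(y)=(D\varphi(\varphi^{-1}(y)))^{T}$ and scalar weight $J_{\varphi}(\varphi^{-1}(y))^{-1}$. The idea is to saturate H\"older's inequality by testing with functions that are affine on small balls. Fixing a compact $F\Subset\Omega'$ and an almost-disjoint cover of $F$ by balls $B_i=B(y_i,\rho)$, on each $B_i$ I would place $a_i\,\eta_i(y)\,e_i\cdot(y-y_i)$, where $e_i$ realizes the operator norm $|A(y_i)|$, $\eta_i$ is a standard cutoff, and $a_i>0$ is an amplitude to be optimized; the glued function is an admissible $g\in L^{1,p}(\Omega')$. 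As $\rho\to0$ the cutoff contributions become negligible and the boundedness inequality reduces, up to $o(1)$, to
\[
\sum_i|A(y_i)|^{q}J_{\varphi}(\varphi^{-1}(y_i))^{-1}a_i^{q}|B_i|\le K^{q}\Bigl(\sum_i a_i^{p}|B_i|\Bigr)^{\frac{q}{p}}.
\]
Optimizing over the amplitudes $a_i$ (the reverse use of H\"older), then letting $\rho\to0$ and $F\uparrow\Omega'$, converts the sum into $\iint_{\Omega'}\bigl(|A(y)|^{p}J_{\varphi}(\varphi^{-1}(y))^{-1}\bigr)^{q/(p-q)}dy\le K^{pq/(p-q)}$; changing variables back to $\Omega$ and using the exponent identity $-p/(p-q)+1=-q/(p-q)$ identifies the left side as $K_{p,q}(\Omega)^{pq/(p-q)}$, so $K_{p,q}(\Omega)\le K$.

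The routine part is the sufficiency together with the bookkeeping of the change of variables. The delicate part is the necessity, where one must (i) ensure the glued test function genuinely lies in $L^{1,p}$ and control the errors both from the cutoffs $\eta_i$ and from replacing $A(y)$ and $J_{\varphi}$ by their values at the centers $y_i$ (this uses continuity of $D\varphi$ together with a Lebesgue-point/covering argument), and (ii) carry out the discrete optimization uniformly so that the passage $\rho\to0$ yields the integral with the sharp constant. I expect (i), the simultaneous control of all localization errors over a cover whose cardinality blows up as $\rho\to0$, to be the main obstacle. For the conformal maps actually used in the paper, $D\varphi(x)$ is a scalar multiple of a rotation, so $|\nabla(f\circ\varphi)|=|\varphi'|\,|(\nabla f)\circ\varphi|$ identically and the choice of optimal direction $e_i$ is irrelevant, which simplifies this step considerably.
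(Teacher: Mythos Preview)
The paper does not give a proof of this statement; Theorem~\ref{CompTh} is quoted from \cite{U1} (see also \cite{VU1}) and used as a black box, so there is no in-paper argument to compare against.

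On its own merits, your sufficiency argument is correct and is exactly the classical one: chain rule, split off $|J_{\varphi}|^{q/p}$, H\"older with the conjugate pair $(p/q,\,p/(p-q))$, then the change of variables $y=\varphi(x)$. This already yields $\|\varphi^{\ast}\|\le K_{p,q}(\Omega)$, and in fact only this direction, with this explicit bound, is ever invoked in the paper (see Theorem~\ref{thm:InverseCompL} and the estimates for $K_{p,q}$ that feed into Theorems~A--C).

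Your necessity sketch via affine bumps on a fine cover is workable for diffeomorphisms and, as you observe, simplifies in the conformal case where $D\varphi$ is a scaled rotation. It differs, however, from the organization in the cited sources. There one introduces the localized set function
\[
\Phi(U)=\sup\Bigl\{\|\varphi^{\ast}f\mid L^{1,q}(\Omega)\|^{\frac{pq}{p-q}}:\ f\in L^{1,p}(\Omega'),\ \operatorname{supp} f\Subset U,\ \|f\mid L^{1,p}(\Omega')\|\le 1\Bigr\},
\]
shows that $\Phi$ is a bounded, countably quasi-additive, absolutely continuous set function on open subsets of $\Omega'$, and recovers $\bigl(|\varphi'|^{p}/|J_{\varphi}|\bigr)^{q/(p-q)}$ as its volume derivative by Lebesgue differentiation, testing on each shrinking cube with a single linear (or capacitor) function. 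This packaging sidesteps precisely the obstacle you flagged---uniform control of localization errors over a cover whose cardinality blows up---because the limit is taken one point at a time. In the smooth diffeomorphism setting the two routes are essentially equivalent; the set-function route is what makes the result go through in the generality of weakly differentiable Sobolev homeomorphisms treated in \cite{U1,VU1}.
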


\begin{defn}
We call a bounded domain $\Omega\subset\mathbb{R}^2$ as
a $(r,q)$-embedding domain, $1\leq q, r\leq\infty$, if the embedding operator
$$
i_{\Omega}: W^{1,q}(\Omega)\hookrightarrow L^r(\Omega)
$$
is bounded. The unit disc $\mathbb D\subset\mathbb R^2$ is an example of the $(r,2)$-embedding domain for all $r\geq 1$. 
\end{defn}

The following theorem gives a characterization of composition operators in the normed Sobolev spaces\cite{GU5}. For readers convenience we reproduce here the proof of the theorem.

\begin{thm}
\label{thm:boundWW} Let $\Omega$ be an $(r,q)$-embedding domain for some $1\leq q\leq r\leq\infty$ and $|\Omega'|<\infty$.
Suppose that a diffeomorphism $\varphi:\Omega\to\Omega'$ induces
a bounded composition operator 
\[
\varphi^{\ast}:L^{1,p}(\Omega')\to L^{1,q}(\Omega),\,\,\,1\leq q\leq p<\infty,
\]
 and the inverse diffeomorphism $\varphi^{-1}:\Omega'\to\Omega$ induces
a bounded composition operator 
\[
(\varphi^{-1})^{\ast}:L^{r}(\Omega)\to L^{s}(\Omega')
\]
for some $p\leq s\leq r$.

 Then $\varphi:\Omega\to\Omega'$ induces
a bounded composition operator 
\[
\varphi^{\ast}:W^{1,p}(\Omega')\to W^{1,q}(\Omega),\,\,\,1\leq q\leq p<\infty.
\]
 \end{thm}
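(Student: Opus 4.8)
The plan is to combine the two hypotheses directly by estimating the full Sobolev norm $\|\varphi^{\ast}(f)\mid W^{1,q}(\Omega)\|$ in terms of $\|f\mid W^{1,p}(\Omega')\|$. Recall that this norm splits as the sum of the $L^q(\Omega)$-norm of $\varphi^{\ast}(f)$ and the $L^{1,q}(\Omega)$-seminorm of $\varphi^{\ast}(f)$. The seminorm term is immediately controlled: by hypothesis $\varphi$ induces a bounded operator $\varphi^{\ast}:L^{1,p}(\Omega')\to L^{1,q}(\Omega)$, so $\|\varphi^{\ast}(f)\mid L^{1,q}(\Omega)\|\leq K_{p,q}(\Omega)\|f\mid L^{1,p}(\Omega')\|\leq K_{p,q}(\Omega)\|f\mid W^{1,p}(\Omega')\|$. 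So the whole problem reduces to bounding $\|f\circ\varphi\mid L^q(\Omega)\|$.

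For the $L^q$ term I would run the following chain. Since $\Omega$ is an $(r,q)$-embedding domain, $\|f\circ\varphi\mid L^r(\Omega)\|\leq i_\Omega\,\|f\circ\varphi\mid W^{1,q}(\Omega)\|$, but to avoid circularity one instead bounds $\|f\circ\varphi\mid L^q(\Omega)\|$ in two moves. First, because $q\le r$ and — here one needs $\Omega$ to have finite measure, which follows since $\varphi$ is a diffeomorphism onto $\Omega'$ with $|\Omega'|<\infty$ and... actually more carefully: one uses the inverse composition operator. Apply Theorem~\ref{thm:LpLq}/the hypothesis that $(\varphi^{-1})^{\ast}:L^r(\Omega)\to L^s(\Omega')$ is bounded. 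Writing $g=f\circ\varphi\in L^r(\Omega)$, we get $f=g\circ\varphi^{-1}=(\varphi^{-1})^{\ast}(g)\in L^s(\Omega')$ with $\|f\mid L^s(\Omega')\|\leq \|(\varphi^{-1})^{\ast}\|\,\|g\mid L^r(\Omega)\|$. Going the other direction: one wants $\|g\mid L^q(\Omega)\|$ bounded by $\|f\mid W^{1,p}(\Omega')\|$. Use the $(r,q)$-embedding $i_\Omega$ on $g$: $\|g\mid L^q(\Omega)\|\le |\Omega|^{1/q-1/r}\|g\mid L^r(\Omega)\|$ is false in general — instead $\|g\mid L^q(\Omega)\|\le \|g\mid W^{1,q}(\Omega)\|$ trivially, which again is circular. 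The clean route: the embedding $i_\Omega:W^{1,q}(\Omega)\hookrightarrow L^r(\Omega)$ combined with boundedness of $\varphi^\ast$ on the homogeneous spaces controls $\|g\mid L^r(\Omega)\|$ up to the $L^q$-norm of $g$; then the $L^q$-norm of $g$ is pulled back to $\Omega'$ via the change of variables and Hölder, using finiteness of $|\Omega'|$ and $p\le s\le r$, to be dominated by $\|f\mid L^p(\Omega')\|$. So the logical skeleton is: bound $\|g\mid L^q(\Omega)\|$ by interpolating between the $L^1$-type control coming from $i:L^{1,q}(\Omega)\to L^1_{\mathrm{loc}}(\Omega)$ and the seminorm bound, then close the loop.

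Concretely I would argue as follows. Set $g=\varphi^{\ast}f$. Using the boundedness of $\varphi^{\ast}:L^{1,p}(\Omega')\to L^{1,q}(\Omega)$ we have $g\in L^{1,q}(\Omega)$, hence (since $\Omega$ is an $(r,q)$-embedding domain) $g\in W^{1,q}(\Omega)$ once we know $g\in L^q(\Omega)$, and $\|g\mid L^r(\Omega)\|\le i_\Omega\bigl(\|g\mid L^q(\Omega)\|+\|g\mid L^{1,q}(\Omega)\|\bigr)$. On the other hand the inverse composition operator $(\varphi^{-1})^{\ast}:L^r(\Omega)\to L^s(\Omega')$ is bounded with norm $K'$, and since $|\Omega'|<\infty$ and $p\le s$, Hölder's inequality gives $\|f\mid L^p(\Omega')\|\le |\Omega'|^{1/p-1/s}\|f\mid L^s(\Omega')\|=|\Omega'|^{1/p-1/s}\|(\varphi^{-1})^{\ast}g\mid L^s(\Omega')\|\le |\Omega'|^{1/p-1/s}K'\,\|g\mid L^r(\Omega)\|$. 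These two inequalities, together with the already-established seminorm bound $\|g\mid L^{1,q}(\Omega)\|\le K_{p,q}(\Omega)\|f\mid W^{1,p}(\Omega')\|$, yield a bound of the form $\|g\mid W^{1,q}(\Omega)\|\le C\bigl(\|f\mid W^{1,p}(\Omega')\|+\|g\mid L^q(\Omega)\|\bigr)$, and one absorbs the $\|g\mid L^q(\Omega)\|$ term. The main obstacle, and the point requiring care, is exactly this absorption step: one must verify that $\|g\mid L^q(\Omega)\|$ genuinely gets controlled — this works because $q\le r$ and $|\Omega|<\infty$ (a consequence of $|\Omega'|<\infty$ together with the composition operator hypotheses), so $\|g\mid L^q(\Omega)\|\le|\Omega|^{1/q-1/r}\|g\mid L^r(\Omega)\|$ and the $L^r$-norm is what the chain above controls; rearranging the resulting inequality, provided the constants are arranged so that the coefficient of $\|g\mid L^r(\Omega)\|$ on the right is strictly less than $1$ — or more simply, by first bounding $\|g\mid L^r(\Omega)\|$ outright using a Poincaré-type splitting on the $(r,q)$-embedding. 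I would therefore state and use the chain of inequalities carefully, tracking that every constant ($i_\Omega$, $K_{p,q}(\Omega)$, $K'$, $|\Omega'|^{1/p-1/s}$, $|\Omega|^{1/q-1/r}$) is finite under the stated hypotheses, and conclude boundedness of $\varphi^{\ast}:W^{1,p}(\Omega')\to W^{1,q}(\Omega)$.
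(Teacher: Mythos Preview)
Your argument has a genuine gap: the chain you set up to control $\|g\mid L^q(\Omega)\|$ is circular, and the absorption step on which you rely cannot be carried out.

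First, your use of the inverse operator $(\varphi^{-1})^{\ast}:L^r(\Omega)\to L^s(\Omega')$ points the wrong way. You derive
\[
\|f\mid L^p(\Omega')\|\le |\Omega'|^{1/p-1/s}K'\,\|g\mid L^r(\Omega)\|,
\]
which bounds the \emph{given} quantity $\|f\mid L^p(\Omega')\|$ in terms of the \emph{unknown} $\|g\mid L^r(\Omega)\|$. This inequality contributes nothing toward bounding $\|g\mid L^q(\Omega)\|$ by $\|f\mid W^{1,p}(\Omega')\|$.

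Second, the absorption fails. Combining the full embedding $\|g\mid L^r(\Omega)\|\le i_\Omega\bigl(\|g\mid L^q(\Omega)\|+\|g\mid L^{1,q}(\Omega)\|\bigr)$ with H\"older $\|g\mid L^q(\Omega)\|\le |\Omega|^{1/q-1/r}\|g\mid L^r(\Omega)\|$ yields
\[
\|g\mid L^q(\Omega)\|\le |\Omega|^{1/q-1/r}\,i_\Omega\,\|g\mid L^q(\Omega)\|+C\|f\mid L^{1,p}(\Omega')\|,
\]
and there is no reason whatsoever for $|\Omega|^{1/q-1/r}\,i_\Omega<1$; already in the case $r=q$ the coefficient equals $i_\Omega\ge 1$, and in general it may be arbitrarily large. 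So nothing can be absorbed.

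The missing idea---which you gesture at in your final clause but never implement---is to subtract a constant \emph{before} invoking the embedding. On a bounded $(r,q)$-embedding domain one has the Poincar\'e form
\[
\inf_{c\in\mathbb R}\|g-c\mid L^r(\Omega)\|\le M\,\|g\mid L^{1,q}(\Omega)\|,
\]
whose right-hand side involves only the seminorm and is therefore already controlled by $K_{p,q}(\Omega)\|f\mid L^{1,p}(\Omega')\|$. Fixing such a $c$, one writes
\[
\|g\mid L^q(\Omega)\|\le |c|\,|\Omega|^{1/q}+|\Omega|^{1/q-1/r}\|g-c\mid L^r(\Omega)\|.
\]
The second term is done. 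The constant $|c|$ is then estimated by
\[
|c|\le |\Omega'|^{-1/p}\|f\mid L^p(\Omega')\|+|\Omega'|^{-1/s}\|f-c\mid L^s(\Omega')\|,
\]
and \emph{this} is where the hypothesis on $(\varphi^{-1})^{\ast}$ is used in the correct direction: it gives $\|f-c\mid L^s(\Omega')\|\le A_{r,s}\|g-c\mid L^r(\Omega)\|$, which is again already bounded by the seminorm of $f$. This is exactly how the paper proceeds; no absorption argument is needed, and every constant is explicit.
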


\begin{proof}
Because the composition operator $(\varphi^{-1})^{\ast}:L^{r}(\Omega)\to L^{s}(\Omega')$
is bounded, then the following inequality 
\[
\|(\varphi^{-1})^{\ast}g\mid L^{s}(\Omega')\|\leq A_{r,s}(\Omega)\|g\mid L^{r}(\Omega)\|
\]
 is correct. Here $A_{r,s}(\Omega)$ is a positive constant.

If a domain $\Omega$ is an embedding domain and the composition
operators 
\[
(\varphi^{-1})^{\ast}:L^{r}(\Omega)\to L^{s}(\Omega'),\,\,\,\varphi^{\ast}:L^{1,p}(\Omega')\to L^{1,q}(\Omega)
\]
 are bounded, then for a function $f=g\circ\varphi^{-1}$ the following
inequalities 
\begin{multline*}
\inf\limits _{c\in\mathbb{R}}\|f-c\mid L^{s}(\Omega')\|\leq A_{r,s}(\Omega)\inf\limits _{c\in\mathbb{R}}\|g-c\mid L^{r}(\Omega)\|\\
\leq A_{r,s}(\Omega)M\|g\mid L^{1,q}(\Omega)\|\leq A_{r,s}(\Omega)K_{p,q}(\Omega)M\|f\mid L^{1,p}(\Omega')\|
\end{multline*}
 hold. Here $M$ and $K_{p,q}(\Omega)$ are positive constants.

The H\"older inequality implies the following estimate 
\begin{multline*}
|c|=|\Omega'|^{-\frac{1}{p}}\|c\mid L^{p}(\Omega')\|\leq|\Omega'|^{-\frac{1}{p}}\bigl(\|f\mid L^{p}(\Omega')\|+\|f-c\mid L^{p}(\Omega')\|\bigr)\\
\leq|\Omega'|^{-\frac{1}{p}}\|f\mid L^{p}(\Omega')\|+|\Omega'|^{-\frac{1}{s}}\|f-c\mid L^{s}(\Omega')\|.
\end{multline*}

Because $q\leq r$ we have 
\begin{multline*}
\|g\mid L^{q}(\Omega)\|\leq\|c\mid L^{q}(\Omega)\|+\|g-c\mid L^{q}(\Omega)\|\leq|c||\Omega|^{\frac{1}{q}}+|\Omega|^{\frac{r-q}{r}}\|g-c\mid L^{r}(\Omega)\|\\
\leq\biggl(|\Omega'|^{-\frac{1}{p}}\|f\mid L^{p}(\Omega')\|+|\Omega'|^{-\frac{1}{s}}\|f-c\mid L_{s}(\Omega')\|\biggr)|\Omega|^{\frac{1}{q}}+|\Omega|^{\frac{r-q}{r}}\|g-c\mid L^{r}(\Omega)\|.
\end{multline*}

From previous inequalities we obtain for $\varphi^{\ast}(f)=g$ finally
\begin{multline*}
\|g\mid L^{q}(\Omega)\|\leq|\Omega|^{\frac{1}{q}}|\Omega'|^{-\frac{1}{p}}\|f\mid L^{p}(\Omega')\|\\
+A_{r,s}(\Omega)K_{p,q}(\Omega)M|\Omega|^{\frac{1}{q}}|\Omega'|^{-\frac{1}{p}}\|f\mid L^{1,p}(\Omega')\|\\
+K_{p,q}(\Omega)M|\Omega|^{\frac{r-q}{r}}\|f\mid L^{1,p}(\Omega)\|.
\end{multline*}
 Therefore the composition operator 
\[
\varphi^{\ast}:W^{1,p}(\Omega')\to W^{1,q}(\Omega)
\]
 is bounded. 
\end{proof}

\section{Poincar\'e-Sobolev inequalities }

\subsection{Weighted Poincare-Sobolev inequalities}

Let $\Omega\subset\mathbb{R}^{2}$ be a planar domain and let $v:\Omega\to \mathbb R$
be a smooth positive real valued function in $\Omega$. 
For $1\leq p<\infty$  consider the
weighted Lebesgue space 
\[
L^{p}(\Omega,v):=\left\{ f:\Omega\to R:\| f\mid {L^{p}(\Omega,v)}\|:=\left(\iint_{\Omega}|f(x,y)|^{p}v(x,y)~dxdy\right)^{1/p}<\infty \right\}.
\]
It is a Banach space for the norm $\|f\mid{L^{p}(\Omega,v)}\|$.

Applications of the conformal mappings theory  to the Poincar\'e-Sobolev inequalities in planar domains is based on the following result (Theorem 3.3, Proposition 3.4 \cite{GU3}) which connected the classical mappings theory and the Sobolev spaces theory.

\begin{thm}
\label{thm:InverseCompL} Let $\Omega\subset\mathbb{R}^2$ be a simply connected domain with non-empty boundary and $\varphi : \Omega\to\mathbb D$ be a conformal homeomorphism. Suppose that the (Inverse) Brennan's Conjecture holds for the interval $[\alpha_0,2/3)$ where $\alpha_{0}\in\big(-2,0\big)$ and $p\in\big(\frac{|\alpha_0|+2}{|\alpha_0|+1},2\big)$.

Then the inverse mapping $\varphi^{-1}$ induces a bounded composition operator 
\[
(\varphi^{-1})^{\ast}:{L_{p}^{1}}(\Omega)\to{L_{q}^{1}}(\mathbb{D})
\]
for any $q$ such that 
\[
1\leq q\leq \frac{p\left|\alpha_{0}\right|}{2+\left|\alpha_{0}\right|-p}<\frac{2p}{4-p}
\]
and for any function $g\in L^1_p(\Omega)$ the inequality
$$
\|(\varphi^{-1})^{\ast} f\mid L^{1,{q}}(\mathbb D)\|\leq \left(\iint\limits_{\mathbb D} |(\varphi^{-1})'|^{\frac{(p-2)q}{p-q}}~dudv\right)^{\frac{p-q}{pq}}\|f\mid L^{1,{p}}(\Omega)\|.
$$
\end{thm}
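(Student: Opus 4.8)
The plan is to recognize $(\varphi^{-1})^{\ast}$ as the composition operator generated by the conformal diffeomorphism $\psi:=\varphi^{-1}:\mathbb D\to\Omega$ and to reduce the boundedness assertion to the analytic criterion of Theorem~\ref{CompTh}, applied with the unit disc playing the role of the source domain. Explicitly, in Theorem~\ref{CompTh} I take the disc $\mathbb D$ in the role of $\Omega$, the domain $\Omega$ in the role of $\Omega'$, and $\psi$ in the role of $\varphi$; the operator $\varphi^{\ast}:L^{1,p}(\Omega')\to L^{1,q}(\Omega)$ of that theorem then becomes precisely $\psi^{\ast}=(\varphi^{-1})^{\ast}:L^{1,p}(\Omega)\to L^{1,q}(\mathbb D)$. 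Since $\varphi:\Omega\to\mathbb D$ is a conformal homeomorphism, $\psi$ is a biholomorphism, hence a real-analytic diffeomorphism of $\mathbb D$ onto $\Omega$ with $\psi'(u,v)\neq 0$ everywhere, so Theorem~\ref{CompTh} applies and it suffices to show that $K_{p,q}(\mathbb D)<\infty$ and to keep track of the constant.

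The key simplification is conformality. For the holomorphic map $\psi$ the differential $D\psi(u,v)$ equals $|\psi'(u,v)|$ times a rotation, so $|D\psi(u,v)|=|\psi'(u,v)|$ and $J_{\psi}(u,v)=|\psi'(u,v)|^{2}$; hence
$$
\frac{|\psi'(u,v)|^{p}}{|J_{\psi}(u,v)|}=|\psi'(u,v)|^{p-2},
$$
and the criterion of Theorem~\ref{CompTh} becomes
$$
K_{p,q}(\mathbb D)=\left(\iint\limits_{\mathbb D}|\psi'(u,v)|^{\frac{(p-2)q}{p-q}}~dudv\right)^{\frac{p-q}{pq}}<\infty,
$$
which is exactly a Brennan-type integral for the inverse mapping. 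Since $1<p<2$ and $q<p$, the exponent $\gamma:=\tfrac{(p-2)q}{p-q}$ is negative, so $\gamma<2/3$; an elementary rearrangement shows that $\gamma\geq\alpha_{0}$ is equivalent to $q\leq\tfrac{p|\alpha_{0}|}{2+|\alpha_{0}|-p}$, which is exactly the admissible range for $q$ in the statement (the hypothesis $p>\tfrac{|\alpha_{0}|+2}{|\alpha_{0}|+1}$ is precisely what forces this upper bound to be at least $1$, and using $|\alpha_{0}|<2$ and $p<2$ one also checks that it is strictly less than $\min\{\,p,\ 2p/(4-p)\,\}$). Therefore the inverse Brennan conjecture (\ref{eq:BR}) on $[\alpha_{0},2/3)$ yields $\iint_{\mathbb D}|\psi'|^{\gamma}\,dudv<\infty$, so $K_{p,q}(\mathbb D)<\infty$ and $(\varphi^{-1})^{\ast}$ is bounded.

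To obtain the explicit norm bound (not merely boundedness) I would reproduce the short estimate underlying Theorem~\ref{CompTh}. From the chain rule $|\nabla(f\circ\psi)(u,v)|\leq|\psi'(u,v)|\,|\nabla f(\psi(u,v))|$ write
$$
|\psi'|^{q}\,|\nabla f(\psi)|^{q}=\bigl(J_{\psi}\,|\nabla f(\psi)|^{p}\bigr)^{q/p}\cdot\bigl(J_{\psi}^{-1}\,|\psi'|^{p}\bigr)^{q/p},
$$
apply H\"older with exponents $p/q$ and $p/(p-q)$, and in the first factor change variables $(x,y)=\psi(u,v)$, so that $J_{\psi}\,dudv$ passes to $dxdy$. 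This gives
$$
\iint\limits_{\mathbb D}|\nabla(f\circ\psi)|^{q}~dudv\leq K_{p,q}(\mathbb D)^{q}\left(\iint\limits_{\Omega}|\nabla f|^{p}~dxdy\right)^{q/p},
$$
that is $\|(\varphi^{-1})^{\ast}f\mid L^{1,q}(\mathbb D)\|\leq K_{p,q}(\mathbb D)\,\|f\mid L^{1,p}(\Omega)\|$, with $K_{p,q}(\mathbb D)=\bigl(\iint_{\mathbb D}|(\varphi^{-1})'|^{(p-2)q/(p-q)}\,dudv\bigr)^{(p-q)/(pq)}$ — the asserted inequality.

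The difficulty here is essentially bookkeeping rather than a deep obstacle: one has to make sure that Theorem~\ref{CompTh}, stated for diffeomorphisms between arbitrary planar domains, legitimately applies when the source is the disc and the target $\Omega$ may be unbounded — which causes no trouble, since the criterion $K_{p,q}(\mathbb D)<\infty$ only involves integration over $\mathbb D$ — and that the admissible range of $q$ is matched exactly to the interval $[\alpha_{0},2/3)$ on which the inverse Brennan conjecture is assumed. The single genuinely analytic ingredient, the inverse Brennan integrability estimate itself, enters the argument purely as a hypothesis.
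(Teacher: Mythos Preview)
Your argument is correct and is precisely the intended one: apply the analytic criterion of Theorem~\ref{CompTh} to the conformal diffeomorphism $\psi=\varphi^{-1}$, use conformality to reduce the kernel $|\psi'|^{p}/|J_{\psi}|$ to $|\psi'|^{p-2}$, and then match the exponent $\gamma=(p-2)q/(p-q)$ to the admissible Brennan range $[\alpha_{0},2/3)$ via the elementary equivalence $\gamma\geq\alpha_{0}\Longleftrightarrow q\leq p|\alpha_{0}|/(2+|\alpha_{0}|-p)$. The paper itself does not give a proof of this statement --- it is quoted from \cite{GU3} (Theorem~3.3 and Proposition~3.4 there) --- but the proof in that reference proceeds along exactly the same lines, so your proposal coincides with the original approach.
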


\begin{rem}
Let us remark that $\frac{|\alpha_0|+2}{|\alpha_0|+1}>\frac{4}{3}$ for any $\alpha_{0}\in\big(-2,0\big)$.
\end{rem}

Using this theorem we prove

\vskip 0.5cm
{\bf Theorem C.} {\it Suppose that $\Omega\subset\mathbb{C}$ is a simply connected domain with non empty boundary, the Brennan's Conjecture holds for the interval $[\alpha_0,2/3)$, where $\alpha_{0}\in\big(-2,0\big)$ and $h(z)=J_{\varphi}(z)$ is the conformal weight defined by a conformal homeomorphism $\varphi : \Omega\to\mathbb D$. Then for every $p \in \left( {(|\alpha_0|+2)}/{(|\alpha_0|+1)},2 \right)$ and every function $f\in W^{1,p}(\Omega)$, the inequality 
$$
\inf_{c\in\mathbb R}\biggl(\iint\limits _{\Omega}|f(x,y)-c|^{r}h(z)\, dxdy \biggr)^{\frac{1}{r}}\leq B_{r,p}(\Omega,h)\biggl(\iint\limits _{\Omega}
|\nabla f(x,y)|^{p}~dxdy\biggr)^{\frac{1}{p}}
$$
holds for any $r$ such that 
\[
1\leq r\leq \frac{2p}{2-p}\cdot\frac{\left|\alpha_{0}\right|}{2+\left|\alpha_{0}\right|}<\frac{p}{2-p}
\]
with the constant 
$$
B_{r,p}(\Omega,h)\leq \inf\limits_{q\in[1,2p/(4-p))}\left\{K_{p,q}(\mathbb D)\cdot B_{r,q}(\mathbb D)\right\}.
$$}
\vskip 0.5cm

Here $B_{r,q}(\mathbb D)$ is the best constant in the (non-weighted) Poincar\'e-Sobolev inequality in the unit disc 
$\mathbb D\subset\mathbb C$ and $K_{p,q}(\Omega)$ is the norm of composition operator 
$$
\left(\varphi^{-1}\right)^{\ast}: L^{1,p}(\Omega)\to L^{1,q}(\mathbb D)
$$
generated by the inverse conformal mapping $\varphi^{-1}: \mathbb D\to\Omega$:
$$
K_{p,q}(\Omega)\leq \left(\iint\limits_{\mathbb D}|\left(\varphi^{-1}\right)'|^{\frac{(p-2)q}{p-q}}~dudv\right)^{\frac{p-q}{pq}}.
$$

\begin{proof}
By the Riemann Mapping Theorem, there exists a conformal mapping
$\varphi:\Omega\to\mathbb{D}$, and by the (Inverse) Brennan's Conjecture, 
\[
\int\limits _{\mathbb{D}}|(\varphi^{-1})^{\prime}(u,v)|^{\alpha}~dudv<+\infty,\quad\text{for all}\quad-2<\alpha_0<\alpha<2/3.
\]
Hence, by Theorem \ref{thm:InverseCompL}, 
the inequality
\[
\|\nabla(f\circ\varphi^{-1})\mid L^{q}(\mathbb{D})\|\leq K_{p.q}(\mathbb D)\| \nabla f\mid L^{p}(\Omega)\|
\]
holds for every 
function $f\in L^{1,p}(\Omega)$ and for any $q$ such that 
\begin{equation} \label{elem}
1\leq q\leq \frac{p\left|\alpha_{0}\right|}{2+\left|\alpha_{0}\right|-p}<\frac{2p}{4-p}.
\end{equation}
Choose arbitrarily $f\in C^{1}(\Omega)$. Then
  $g=f\circ\varphi^{-1}\in C^{1}(\mathbb{D})$ and, by the classical Poincar\'e-Sobolev inequality,
\begin{equation}
\inf\limits_{c\in\mathbb R}\|f\circ\varphi^{-1}-c\mid L^{r}(\mathbb{D})\|\leq B_{q,r}(\mathbb D)\|\nabla (f\circ\varphi^{-1})\mid L^{q}(\mathbb{D})\| \label{eq:PS}
\end{equation}
for any $r$ such that 
\[
1\leq r\leq \frac{2q}{2-q}
\]

By elementary calculations from the inequality (\ref{elem}) follows

\[
\frac{2q}{2-q}\leq \frac{2p}{2-p}\cdot\frac{\left|\alpha_{0}\right|}{2+\left|\alpha_{0}\right|}<\frac{p}{2-p}
\]

Combining inequalities for $2q/(2-q)$ and $r$ we conclude that the inequality (\ref{eq:PS}) holds for any $r$ such that

\[
1\leq r\leq \frac{2p}{2-p}\cdot\frac{\left|\alpha_{0}\right|}{2+\left|\alpha_{0}\right|}<\frac{p}{2-p}.
\]

Using the change of variable formula, the classical Poincar\'e-Sobolev inequality for the unit disc
$$
\inf\limits_{c\in \mathbb R}\biggl(\iint\limits_{\mathbb D}|g(u,v)-c|^r ~dudv\biggr)^{\frac{1}{r}}
\leq B_{r,q}(\mathbb D)\left(\iint\limits_{\mathbb D}|\nabla g(u,v)|^q~dudv\right)^{\frac{1}{q}}
$$
and Theorem \ref{thm:InverseCompL}, we finally infer

\begin{multline}
\inf\limits_{c\in \mathbb R}\biggl(\iint\limits_{\Omega}|f(x,y)-c|^r h(x,y)~dxdy\biggr)^{\frac{1}{r}}=\\
\inf\limits_{c\in \mathbb R}\biggl(\iint\limits_{\Omega}|f(x,y)-c|^r J_{\varphi}(x,y)~dxdy\biggr)^{\frac{1}{r}}=
\inf\limits_{c\in \mathbb R}\biggl(\iint\limits_{\mathbb D}|g(u,v)-c|^r ~dudv\biggr)^{\frac{1}{r}}\\
\leq B_{r,q}(\mathbb D)\left(\iint\limits_{\mathbb D}|\nabla g(u,v)|^q~dudv\right)^{\frac{1}{q}}
\leq K_{p,q}(\mathbb D)\cdot  B_{r,q}(\mathbb D)
\left(\iint\limits_{\Omega}|\nabla f(x,y)|^p~dxdy\right)^{\frac{1}{p}}.
\nonumber
\end{multline}

Approximating an arbitrary function $f\in W^{1,p}(\Omega)$ by smooth functions we obtain
$$
\inf_{c\in\mathbb R}\biggl(\iint\limits _{\Omega}|f(x,y)-c|^{r}h(z)\, dxdy \biggr)^{\frac{1}{r}}\leq B_{r,p}(\Omega,h)\biggl(\iint\limits _{\Omega}
|\nabla f(x,y)|^{p}~dxdy\biggr)^{\frac{1}{p}}
$$
with the constant 
$$
B_{r,p}(\Omega,h)\leq \inf\limits_{q: q\in[1,2p/(4-p))}\left\{K_{p,q}(\mathbb D)\cdot B_{r,q}(\mathbb D)\right\}.
$$
\end{proof}

The property of the conformal $\alpha$-regularity implies the integrability of a Jacobian of conformal mappings (conformal weights) and therefore for any conformal $\alpha$-regular domain we have the embedding of weighted Lebesgue spaces $L^r(\Omega,h)$ into 
non-weighted Lebesgue spaces $L^s(\Omega)$ for $s=\frac{\alpha-2}{\alpha}r$.

\begin{lem}
\label{lem:rsemb} 
Let $\Omega$ be a conformal $\alpha$-regular domain. Then for any function $f\in L^r(\Omega,h)$, $\alpha/(\alpha-2)\leq r<\infty$,  the inequality
$$
\|f\mid L^s(\Omega)\|\leq \left(\iint\limits_{\mathbb D}\left|(\varphi^{-1})^{\prime}\right|^{\alpha}~dudv\right)^{\frac{2}{\alpha}\cdot\frac{1}{s}}
\|f\mid L^r(\Omega,h)\|
$$
holds for $s=\frac{\alpha-2}{\alpha}r$.
\end{lem}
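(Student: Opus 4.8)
The plan is to deduce this Lebesgue-space embedding from a single application of Hölder's inequality, using the definition of conformal $\alpha$-regularity to control the weight $h = J_\varphi = |\varphi'|^2$. First I would write the unweighted $L^s$-norm of $f$ on $\Omega$ by artificially inserting the weight $h$: namely
\[
\iint\limits_{\Omega}|f(x,y)|^s\,dxdy = \iint\limits_{\Omega}|f(x,y)|^s h(x,y)^{s/r}\cdot h(x,y)^{-s/r}\,dxdy,
\]
which is legitimate since $h>0$ on $\Omega$. Now apply Hölder's inequality with the conjugate exponents $r/s$ and $r/(r-s)$ (note $r/s > 1$ precisely because $s = \frac{\alpha-2}{\alpha}r < r$ for $\alpha > 2$, and $r \geq \alpha/(\alpha-2)$ guarantees $s\geq 1$). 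This gives
\[
\iint\limits_{\Omega}|f|^s\,dxdy \leq \left(\iint\limits_{\Omega}|f|^r h\,dxdy\right)^{\frac{s}{r}}\left(\iint\limits_{\Omega}h^{-\frac{s}{r-s}}\,dxdy\right)^{\frac{r-s}{r}}.
\]

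The second factor is where the geometric hypothesis enters, and this is the only nontrivial step. I must compute the exponent: with $s = \frac{\alpha-2}{\alpha}r$, one has $r - s = \frac{2}{\alpha}r$, so $\frac{s}{r-s} = \frac{\alpha-2}{2}$, and since $h = |\varphi'|^2$ we get $h^{-s/(r-s)} = |\varphi'|^{-(\alpha-2)} = |\varphi'|^{2-\alpha}$. Then I change variables via the conformal map $\varphi:\Omega\to\mathbb D$ (equivalently $\varphi^{-1}:\mathbb D\to\Omega$), using $dxdy = J_{\varphi^{-1}}\,dudv = |(\varphi^{-1})'|^2\,dudv$ and the identity $|\varphi'(x,y)| = |(\varphi^{-1})'(u,v)|^{-1}$ at corresponding points $z = \varphi^{-1}(u,v)$. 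This converts
\[
\iint\limits_{\Omega}|\varphi'|^{2-\alpha}\,dxdy = \iint\limits_{\mathbb D}|(\varphi^{-1})'|^{\alpha-2}\cdot|(\varphi^{-1})'|^2\,dudv = \iint\limits_{\mathbb D}|(\varphi^{-1})'|^{\alpha}\,dudv,
\]
which is finite exactly by the definition \eqref{alphareg} of conformal $\alpha$-regularity.

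Combining the two displays and raising to the power $1/s$ yields
\[
\|f\mid L^s(\Omega)\| \leq \left(\iint\limits_{\mathbb D}|(\varphi^{-1})'|^{\alpha}\,dudv\right)^{\frac{r-s}{rs}}\|f\mid L^r(\Omega,h)\|,
\]
and substituting $\frac{r-s}{rs} = \frac{2}{\alpha}\cdot\frac{1}{s}$ (from $r-s = \frac{2}{\alpha}r$) gives precisely the claimed constant $\left(\iint_{\mathbb D}|(\varphi^{-1})'|^{\alpha}\,dudv\right)^{\frac{2}{\alpha}\cdot\frac{1}{s}}$. I would end with a brief remark that if $\iint_{\Omega}|f|^r h\,dxdy = \infty$ the inequality is trivial, so there is no integrability obstruction in applying Hölder. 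The main (and essentially only) obstacle is bookkeeping the exponents correctly through the change of variables; there is no analytic difficulty, since $\varphi$ is a diffeomorphism of $\Omega$ onto $\mathbb D$ and the change-of-variables formula applies directly.
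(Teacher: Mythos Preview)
Your proof is correct and follows essentially the same approach as the paper: insert and remove the weight $h=J_\varphi$, apply H\"older's inequality with exponents $r/s$ and $r/(r-s)$, and then change variables via $\varphi^{-1}:\mathbb D\to\Omega$ to identify the second factor with $\iint_{\mathbb D}|(\varphi^{-1})'|^{\alpha}\,dudv$. Your exponent bookkeeping is slightly more explicit than the paper's, but the argument is identical.
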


\begin{proof}
Because $\Omega$ is a conformal $\alpha$-regular domain then there exists a conformal mapping $\varphi: \Omega\to\mathbb D$ such that
$$
\left(\iint\limits_{\mathbb D}\left|J_{\varphi^{-1}}(u,v)\right|^{\frac{r}{r-s}}~dudv\right)^{\frac{r-s}{rs}}=\left(\iint\limits_{\mathbb D}\left|(\varphi^{-1})^{\prime}(u,v)\right|^{\alpha}~dudv\right)^{\frac{2}{\alpha}\cdot\frac{1}{s}}<\infty,
$$
for $s=\frac{\alpha-2}{\alpha}r$.
Then
\begin{multline}
\|f\mid L^s(\Omega)\|\\=\left(\iint\limits_{\Omega}|f(x,y)|^s~dxdy\right)^{\frac{1}{s}}=
\left(\iint\limits_{\Omega}|f(x,y)|^sJ^{\frac{s}{r}}_{\varphi}(x,y)J^{-\frac{s}{r}}_{\varphi}(x,y)~dxdy\right)^{\frac{1}{s}}
\\
\leq \left(\iint\limits_{\Omega}|f(x,y)|^rJ_{\varphi}(x,y)~dxdy\right)^{\frac{1}{r}}
\left(\iint\limits_{\Omega}J^{-\frac{s}{r-s}}_{\varphi}(x,y)~dxdy\right)^{\frac{r-s}{rs}}\\=
\left(\iint\limits_{\Omega}|f(x,y)|^rh(x,y)~dxdy\right)^{\frac{1}{r}}
\left(\iint\limits_{\mathbb D}J^{\frac{r}{r-s}}_{\varphi^{-1}}(u,v)~dudv\right)^{\frac{r-s}{rs}}\\
=\left(\iint\limits_{\Omega}|f(x,y)|^rh(x,y)~dxdy\right)^{\frac{1}{r}}
\left(\iint\limits_{\mathbb D}\left|(\varphi^{-1})^{\prime}(u,v)\right|^{\alpha}~dudv\right)^{\frac{2}{\alpha}\cdot\frac{1}{s}}.
\end{multline}
\end{proof}

From Theorem C  and Lemma \ref{lem:rsemb}  follows Theorem B:

\vspace{0.5cm}

{\bf Theorem B.}
{\it Suppose that $\Omega\subset\mathbb{C}$ is a simply connected domain with non empty boundary, the Brennan's Conjecture holds for the interval $[\alpha_0,2/3)$, where $\alpha_{0}\in\big(-2,0\big)$.

Then for every $$p \in \left( \max \left\{\frac{4}{3}, \frac{2\alpha(|\alpha_0|+2)}{(2\alpha+3\alpha|\alpha_0|-4|\alpha_0|},2 \right\}\right),$$ every $s \in [1,\frac{\alpha-2}{\alpha}\frac{p}{2-p}\frac{|\alpha_0|}{2+|\alpha_0}]$ and every function 
$f\in W^{1,p}(\Omega)$, the inequality 
\begin{equation}
\inf_{c\in\mathbb R}\biggl(\iint\limits _{\Omega}|f(x,y)-c|^{s}\, dxdy \biggr)^{\frac{1}{s}}\leq B_{s,p}(\Omega)\biggl(\iint\limits _{\Omega}
|\nabla f(x,y)|^{p}~dxdy\biggr)^{\frac{1}{p}}\label{eq:NonWPI1}
\end{equation}
holds with the constant 
\begin{multline}
B_{s,p}(\Omega)\leq \|{(\varphi^{-1})^{\prime}}| L^{\alpha}(\mathbb D)\|^{\frac{2}{s}}B_{r,p}(\Omega,h)\\
 \leq \inf\limits_{q\in[1,2p/(4-p))}\left\{B_{\frac{\alpha s}{\alpha-2},q}(\mathbb D)\cdot\|{(\varphi^{-1})^{\prime}}| L^{\alpha}(\mathbb D)\|^{\frac{2}{s}}K_{p,q}(\mathbb D) \right\}.
\nonumber
\end{multline}
}
\vskip 0.5cm

\begin{proof}
The inequality (\ref{eq:NonWPI1}) immediately follows from the main inequality of Theorem C and the main inequality of Lemma \ref{lem:rsemb}. The last part of this inequality used known estimates for the constant  of the Poincar\'e-Sobolev inequality in the unit disc.

It is necessary to clarify restrictions for parameters $p,r,s$, because these restrictions do not follow directly from Theorem A and Lemma \ref{lem:rsemb}. 

By Lemma \ref{lem:rsemb} $s=(\alpha-2)/\alpha r$.
By Theorem C 
\[
1\leq r\leq \frac{2p}{2-p}\cdot\frac{\left|\alpha_{0}\right|}{2+\left|\alpha_{0}\right|}<\frac{p}{2-p}
\]

Hence

\[
1\leq s\leq \frac{\alpha-2}{\alpha}\cdot \frac{2p}{2-p}\cdot\frac{\left|\alpha_{0}\right|}{2+\left|\alpha_{0}\right|}<\frac{\alpha-2}{\alpha}\cdot \frac{p}{2-p}
\]

Because $1\leq s$ we have from this inequality that

\[
\frac{\alpha}{\alpha-2}\leq \frac{2p}{2-p}\cdot\frac{\left|\alpha_{0}\right|}{2+\left|\alpha_{0}\right|}<\frac{p}{2-p}
\]

By elementary calculations 

\[
p\geq \frac{2\alpha(2+|\alpha_0|)}{2\alpha+3\alpha|\alpha_0|-4|\alpha_0|}> \frac{\alpha}{\alpha-1}
\]

The last inequality is correct by factor that Brennan's conjecture is correct for all $\alpha: -2<\alpha<2/3$.
\end{proof}

Theorem A follows from Theorem B, using $s=p$, that is necessary for coincidence of the first nontrivial Neumann-Laplace eigenvalue and the constant in the Poincar\'e-Sobolev inequality of Theorem B.

\vspace{0.5cm}
{\bf Theorem A.} {\it 
Let $\varphi:\Omega\to \mathbb{D}$ be a conformal homeomorphism from a conformal $\alpha$-regular domain $\Omega$ to the unit disc $\mathbb{D}$ and the Brennan's Conjecture holds for the interval $[\alpha_0,2/3)$, where $\alpha_{0}\in\big(-2,0\big)$.

Then for every $$p \in \left( \max\left\{4/3,\frac{|\alpha_0|}{2+|\alpha_0|}\cdot \frac{\alpha+2)}{\alpha}\right\},2 \right)$$ the following inequality is correct
\begin{multline}
\frac{1}{\mu_p(\Omega)}\leq\\
 \inf\limits_{q\in[1,2p/(4-p))} \left\{B^{p}_{\frac{\alpha p}{\alpha-2},q}(\mathbb D)\cdot\|{(\varphi^{-1})^{\prime}}| L^{\alpha}(\mathbb D)\|^{2}\left(\iint\limits_{\mathbb D}\left|\left(\varphi^{-1}\right)^{\prime}\right|^{\frac{(p-2)q}{p-q}}~dudv\right)^{\frac{p-q}{q}} \right\}. 
\nonumber
\end{multline}}  
\vskip 0.5cm

\begin{proof}
By Lemma \ref{lem:rsemb} $r=\frac{\alpha-2}{\alpha}p$. By Theorem C 
\[
1\leq r\leq \frac{2p}{2-p}\cdot\frac{\left|\alpha_{0}\right|}{2+\left|\alpha_{0}\right|}<\frac{p}{2-p}
\]
Hence
\[
\frac{\alpha-2}{\alpha}\leq \frac{1}{2-p}\cdot\frac{2\left|\alpha_{0}\right|}{2+\left|\alpha_{0}\right|}
\]
By elementary calculations
\[
p\geq 2-\frac{2\left|\alpha_{0}\right|}{2-\left|\alpha_{0}\right|}\frac{\alpha-2}{\alpha}>
2-\frac{\alpha-2}{\alpha}=\frac{\alpha+2}{\alpha}
\]

The last inequality is correct by factor that Brennan's conjecture is correct for all $\alpha: -2<\alpha<2/3$.
\end{proof}

\vspace{0.5cm}

{\bf Corollary A.} {\it Suppose that $\Omega$ is smoothly bounded Jordan domain with a boundary $\partial\Omega$ of a class $C^1$ with a Dini continuous normal. Let $\varphi:\Omega\to\mathbb{D}$ be a conformal homeomorphism from $\Omega$ onto the unit disc $\mathbb{D}$. Then for every 
$p \in \left( 1,2 \right)$  the following inequality is correct
$$
\frac{1}{\mu_p(\Omega)}\leq
\|{(\varphi^{-1})^{\prime}}| L^{\infty}(\mathbb D)\|^{p}\frac{1}{\mu_p(\mathbb D)}.
$$
}  

\begin{proof}
If $\Omega$ is smoothly bounded Jordan domain with a boundary $\partial\Omega$ of a class $C^1$ with a Dini continuous normal, then  
for a conformal mapping $\varphi:\Omega\to\mathbb D$, the derivative $\varphi'$ is bounded away from $0$ and $\infty$ \cite{P}. Hence. we can apply Theorem A in the limit case $\alpha=\infty$ and $p=q$.
Then
\begin{multline}
\frac{1}{\mu_p(\Omega)}\leq B^{p}_{p,p}(\mathbb D)\cdot\|{(\varphi^{-1})^{\prime}}| L^{\infty}(\mathbb D)\|^{2}
\|{(\varphi^{-1})^{\prime}}| L^{\infty}(\mathbb D)\|^{p-2}\\
=|{(\varphi^{-1})^{\prime}}| L^{\infty}(\mathbb D)\|^{p}\frac{1}{\mu_p(\mathbb D)}. 
\nonumber
\end{multline}
\end{proof}

As an application we obtain the lower estimate of the first non-trivial eigenvalue on the Neumann eigenvalue problem for the $p$-Laplace operator in the interior of the cardioid (which is a non-convex domain with a non-smooth boundary). 

\begin{exa}
\label{exa:cardioid}
Let $\Omega_c$ be the interior of the cardioid $\rho=2(1+cos\theta)$. The diffeomorphism
$$
z=\psi(w)=(w+1)^2,w=u+iv,
$$
is conformal and maps the unit disc $\mathbb D$ onto $\Omega_c$. 
Then by Theorem A:
\begin{multline}
\frac{1}{\mu_p(\Omega_c)}\leq 
\inf\limits _{1\leq q\leq\frac{2p}{4-p}}
\left(\frac{2}{\pi^{\delta}}\left(\frac{1-\delta}{{1}/{2}-\delta}\right)^{1-\delta}\right)^p
\\
\times
\left(\iint\limits_{\mathbb D}(2|w+1|)^{\alpha}~dudv\right)^{\frac{2}{\alpha}}
\left(\iint\limits_{\mathbb D}(2|w+1|)^{\frac{(p-2)q}{p-q}}~dudv\right)^{\frac{p-q}{q}}. 
\label{eq:card}
\end{multline}
Here $\delta=1/q-(\alpha-2)/\alpha p$.
\end{exa}

\noindent Vladimir Gol'dshtein \, \hskip 3.2cm Alexander Ukhlov

\noindent Department of Mathematics \hskip 2.25cm Department of Mathematics

\noindent Ben-Gurion University of the Negev \hskip 1.05cm Ben-Gurion
University of the Negev

\noindent P.O.Box 653, Beer Sheva, 84105, Israel \hskip 0.7cm P.O.Box
653, Beer Sheva, 84105, Israel

\noindent E-mail: vladimir@bgu.ac.il \hskip 2.5cm E-mail: ukhlov@math.bgu.ac.il
\end{document}